\definecolor{darkgray}{RGB}{64,64,64}
\definecolor{litegray}{RGB}{192,192,192}
\author{Zilin Jiang\thanks{Department of Mathematics, Technion -- Israel
Institute of Technology, Technion City, Haifa 32000.}${\
}^{,}$\footnote{Email: {\tt
jiangzilin@technion.ac.il}. Supported in part by ISF grant nos 1162/15, 936/16.} \and Alexandr Polyanskii\footnotemark[1]${\
}^{,}$\footnote{Moscow Institute of Physics and Technology and Institute for
Information Transmission Problems RAS. Email: {\tt
alexander.polyanskii@yandex.ru}. Supported in part by ISF grant no. 409/16, and
by the Russian Foundation for Basic Research through grant nos 15-01-99563 A,
15-01-03530 A.}}
\title{Proof of L\'aszl\'o Fejes T\'oth's zone conjecture}
\date{}
\newtheorem{theorem}{Theorem}
\newtheorem{corollary}[theorem]{Corollary}
\newtheorem{conjecture}{Conjecture}
\newtheorem{lemma}[theorem]{Lemma}
\newtheorem{remark}{Remark}
\newcommand{\abs}[1]{\left\lvert {#1}\right\rvert}
\newcommand{\dset}[2]{\left\{#1 : #2\right\}}
\newcommand{\sset}[1]{\left\{#1\right\}}
\newcommand{\al}{\alpha}
\newcommand{\w}{\mathbf{w}}
\newcommand{\uu}{\mathbf{u}}
\newcommand{\R}{\mathbb{R}}
\renewcommand{\epsilon}{\varepsilon}
\begin{document}

\maketitle

\begin{abstract}
  A zone of width $\omega$ on the unit sphere is the set of points within
  spherical distance $\omega/2$ of a given great circle. We show that the total
  width of any collection of zones covering the unit sphere is at least $\pi$,
  answering a question of Fejes T\'oth from 1973.
\end{abstract}

\section{Introduction}\label{intro}

A \emph{plank} (or \emph{slab}, or \emph{strip}) of width $w$ is a part of the
$d$-dimensional Euclidean space $\R^d$ that lies between two parallel
hyperplanes at distance $w$. Given a convex body $C$, its \emph{width} is the
smallest $w$ such that a plank of width $w$ covers $C$. In 1932, in the context
of ``degree of equivalence of polygons'', Tarski wrote in
\cite{tarski1932uwagi}\footnote{See \cite[Chapter 7.3, 7.4]{MR3307383} for
English translations of \cite{moese1932} and \cite{tarski1932uwagi}.}, and we
quote from its English translation \cite[Chapter 7.4]{MR3307383},

\begin{quote}
  The width of the narrowest strip covering a plane figure $F$ we shall call
  the \emph{width of figure} $F$, and we shall denote it by the symbol
  $\omega(F)$. ...
  
  \textbf{A.} \emph{If a figure $F$ contains in itself, as a part, a disk with
  diameter equal to the width of the figure (for example, if figure $F$ is a
  disk or a parallelogram) and if, moreover, we subdivide this figure into any
  $n$ parts $C_1, C_2, \ldots, C_n$, then}
  \[
    \omega(F) \le \omega(C_1) + \omega(C_2) + \dots + \omega(C_n).
  \]
  \emph{Proof.} For the case in which figure $F$ is a disk, the proof is an
  almost word-for-word repetition of the proof of lemma I in the cited article
  by Moese. For the general case, ...
\end{quote}

Here ``the cited article by Moese'' refers to \cite{moese1932}\footnotemark[1].
For a long period of time, the authors of the current paper mistakenly
overlooked the contribution of Moese.

The following conjecture, which is attributed to Tarski, seems to first appear
in \cite{MR0038085}.

\begin{quote}
  \textbf{Tarski's plank problem.} If a convex body of width $w$ is covered by
  a collection of planks in $\R^d$, then the total width of the planks is at
  least $w$.
\end{quote}

It took almost twenty years before Bang proved Tarski's conjecture in his
memorable papers \cite{MR0038085, MR0046672}. At the end of his paper, Bang
asked whether his theorem could be strengthened by asking that the width of
each plank should be measured relative to the width of the convex body being
covered, in the direction normal to the plank. Although Bang's conjecture still
remains open, Ball~\cite{MR1106748} established it for centrally symmetric
convex bodies (see \cite{MR1832555} for the complex analogue). Ball also used
Bang's proof to improve the optimal density of lattice
packings~\cite{MR1191572}.

Variants of Tarski's plank problem continue to generate interest in the
geometric and analytic aspects of coverings of a convex body (see
\cite{MR3204554} for a recent survey). Most of these variants find its
generalization in spherical geometry. Bezdek and Schneider~\cite{MR2650488}
showed that the total inradius of the spherical convex domains%
\footnote{A \emph{convex spherical domain} is a domain any two points of which
can be joined by an arc of a great circle lying in the domain. The inradius of
the domain $C$ is the spherical radius of the largest cap contained in $C$.}
covering the $n$-dimensional unit sphere is at least $\pi$ (see~\cite[Section
6]{MR2957644} for a simplified proof by Akopyan and Karasev, and see
\cite{MR2111950} for the Euclidean version by Kadets).

In this paper, we are concerned with a spherical analogue of a plank: a
\emph{zone} of width $\omega$ on the 2-dimensional unit sphere is defined as
the set of points within spherical distance $\omega/2$ of a given great circle.
In 1973, Fejes T\'oth \cite{MR1537207} conjectured that if $n$ equal zones
cover the sphere then their width is at least $\omega_n = \pi / n$. Rosta
\cite{MR0345005} and Linhart \cite{MR0358576} proved the special case of 3 and
4 zones respectively. Lower bounds for $\omega_n$ were established by Fodor,
V\'igh and Zarn\'ocz \cite{MR3518649}.

Fejes T\'oth also formulated the generalized conjecture, which has been
reiterated in \cite[Chapter 3.4, Conjecture 5]{MR2163782} and the arXiv version
of \cite[Conjecture 8.3]{MR2957644}, for any set of zones (not necessarily of
the same width) covering the unit sphere.

\begin{quote}
  \textbf{Fejes T\'oth's zone conjecture.} The total width of any set of zones
  covering the sphere is at least $\pi$.
\end{quote}

We completely resolve this conjecture and generalize it for the $d$-dimensional
unit sphere $S^d$. Hereafter, all $d$-spheres are embedded in $\R^{d+1}$ and
centered at the origin. For us, a \emph{great sphere} is the intersection of a
$d$-sphere and a hyperplane passing through the origin. This extends the notion
of a great circle on a 2-dimensional sphere. Given a great sphere $C$, a
\emph{zone} $P$ of width $\omega$ on $S^d$ is defined as the set of points
within spherical distance $\omega/2$ of $C$, and the \emph{central hyperplane}
of $P$ is the hyperplane through $C$.

\begin{theorem}\label{main1}
  The total width of any collection of zones $P_1, \dots, P_n$ covering the
  unit $d$-sphere is at least $\pi$. For all $i\in[n]$, let $2\al_i$ be the
  width of $P_i$, and let $\ell_i$ be the line through the origin perpendicular
  to the central hyperplane of $P_i$. Equality holds if and only if after
  reordering the zones, $\ell_1, \dots, \ell_n$ are coplanar lines in clockwise
  order such that the angle between $\ell_i$ and $\ell_{i+1}$ equals to $\al_i
  + \al_{i+1}$ for all $i\in [n]$ under the convention that $\ell_{n+1} =
  \ell_1$ and $\al_{n+1} = \al_1$.
\end{theorem}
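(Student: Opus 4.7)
For each $i$, fix a unit vector $u_i \in S^d$ lying on $\ell_i$. Then
\[
    P_i = \dset{x \in S^d}{\abs{\ip{x}{u_i}} \le \sin\al_i},
\]
and the hypothesis that $P_1,\dots,P_n$ cover $S^d$ becomes: for every $x\in S^d$ there is some $i$ with $\abs{\ip{x}{u_i}}\le\sin\al_i$. I proceed by contradiction, assuming $\sum_i\al_i<\pi/2$, and aim to construct a point $p\in S^d$ with $\abs{\ip{p}{u_i}}>\sin\al_i$ for every $i$.

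The first ingredient is the \emph{coplanar} case, when all the $u_i$ lie in a common $2$-plane $\Pi\subset\R^{d+1}$. Then $\ip{x}{u_i}$ depends only on the orthogonal projection of $x$ onto $\Pi$, so covering $S^d$ is equivalent to covering the great circle $\Pi\cap S^d$. On that circle each $P_i$ is a pair of antipodal arcs of length $2\al_i$, and after passing to the quotient by the antipodal map, the problem reduces to covering a circle of circumference $\pi$ by arcs of total length $\sum_i 2\al_i$. Hence $\sum_i 2\al_i\ge\pi$ in the coplanar case, with equality if and only if the arcs tile the circle, which unfolds to exactly the cyclic equality configuration stated in the theorem.

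The main step is to reduce the general case to the coplanar one. My plan is a continuous deformation: build a family $(u_1(t),\ldots,u_n(t))$ for $t\in[0,1]$ with fixed half-widths $\al_i$, satisfying $u_i(0)=u_i$, with all $u_i(1)$ lying in a common $2$-plane, and such that the deformed zones cover $S^d$ at every time $t$. Applying the coplanar case at $t=1$ would then give $\sum 2\al_i\ge\pi$.

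The main obstacle is preserving the covering property throughout the deformation, since rotating the normals can open uncovered regions. I plan to inspect the first moment $t^*$ at which a point $p^*\in S^d$ becomes uncovered and analyze the active equations $\ip{p^*}{u_i(t^*)}=\pm\sin\al_i$; these rigid constraints should either yield $\sum\al_i\ge\pi/2$ directly or indicate how to redirect the deformation. An alternative worth attempting is a Bang-style construction: choose signs $\epsilon_i\in\sset{\pm 1}$ iteratively and set $p=v/\|v\|$ with $v=\sum_i\epsilon_i(\sin\al_i)u_i$; on the sphere Bang's inequality sharpens to $\ip{v}{\epsilon_i u_i}>\|v\|\sin\al_i$, so controlling $\|v\|$ is the crux, and this is where the hypothesis $\sum\al_i<\pi/2$ (strictly stronger than the Bang-Euclidean $\sum\sin\al_i<1$) must enter.
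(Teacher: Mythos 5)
Your coplanar ($S^1$) analysis is fine, and your ``Bang-style alternative'' is in fact the right starting point, but the proposal stops precisely where the real work begins.

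Concerning the deformation plan: there is no clear way to choose the target $2$-plane, and ``analyze the active constraints at the first bad time'' is a hope, not an argument. More importantly the covering condition is a closed constraint, and generically a continuous rotation of the normals will destroy it immediately, so there is no reason a covering-preserving path to a coplanar configuration exists. The paper makes no use of any such deformation.

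Concerning the Bang-style alternative: you correctly set $\w_i = \sin\al_i\, u_i$ and consider signed sums, but the inequality you get from a well-chosen sign pattern is not $\ip{v}{\epsilon_i u_i} > \|v\|\sin\al_i$; it is the Euclidean Bang inequality $\ip{v}{\epsilon_i u_i} \ge \sin\al_i$. (The paper's sign choice is Bogn\'ar's: pick $\epsilon\in\sset{\pm1}^n$ maximizing $\|v\|$; then $\|v\|^2 \ge \|v - 2\epsilon_i\w_i\|^2$ gives $\epsilon_i\ip{v}{u_i}\ge\sin\al_i$.) The point $p = v/\|v\|$ escapes all zones only if $\|v\| < 1$, and you correctly flag ``controlling $\|v\|$'' as the crux --- but you give no mechanism for doing so, and in general $\|v\|$ can exceed $1$ even when $\sum\al_i<\pi/2$, so the argument does not close.

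The missing idea is a \emph{merging} step, in the spirit of Goodman--Goodman, which is Lemma~\ref{replacement} of the paper. When the maximal $\|v\|\ge 1 > \sin(\sum\al_i)$, take a minimal subset $I\subset[n]$ with $\abs{\sum_{i\in I}\w_i} > \sin(\sum_{i\in I}\al_i)$. Minimality guarantees $\abs{\sum_{i\in I}\w_i - \w_j}\le\sin\bigl(\sum_{i\in I}\al_i - \al_j\bigr)$ for each $j\in I$, and these are exactly the hypotheses under which the dual caps $\dset{D_i}{i\in I}$ fit inside a single cap of spherical radius $\le\sum_{i\in I}\al_i$. Dualizing back, the zones $\dset{P_i}{i\in I}$ (with $\abs{I}\ge 2$) are covered by a single zone of no greater total width. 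Replacing them by that one zone strictly decreases $n$, and one recurses until either $n=1$ (a single zone of width $<\pi$ trivially fails to cover) or the new maximal $\|v\|<1$. This is the ingredient your proposal lacks. Likewise, the equality case is not obtained by ``unfolding'' the circle argument: one needs a separate rigidity lemma (the paper's Lemma~\ref{3caps}) showing that if equality holds throughout the merging chain then $\w_1,\dots,\w_n$ are forced to be coplanar, after which the $S^1$ analysis applies.
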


\begin{remark}
  Note that this result does not follow from \cite{MR2650488} because the zones 
  are not convex in spherical geometry.
\end{remark}

Our result is also connected to covering of a non-separable family of
homothetic copies of a convex body. The first result in this direction was due
to Goodman and Goodman \cite{MR0013513}. They proved the following theorem
conjectured by Erd\H{o}s.

\begin{theorem}\label{2goodman}
  Let the circles with radii $r_1, \dots, r_n$ lie in a plane. If no line
  divides the circles into two non-empty sets without intersecting any
  circle, then the given circles can be covered by a circle of radius $r =
  \sum_{i=1}^n{r_i}$.
\end{theorem}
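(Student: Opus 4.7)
The plan is to find a point $O \in \R^2$ such that $|c_i - O| + r_i \le r$ for every $i$; this says precisely that the circle of radius $r = \sum_j r_j$ centered at $O$ covers every $D_i$. Equivalently, letting $E_i \subset \R^2$ be the closed disk of radius $r - r_i$ centered at $c_i$, I must show that $\bigcap_{i=1}^n E_i \ne \emptyset$. By Helly's theorem in the plane, this reduces to verifying that every pair and every triple of the $E_i$'s has a common point.

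Pairwise intersection follows from a projection argument using non-separability. For each unit vector $\vec v$, the projection of $\bigcup_i D_i$ onto the $\vec v$-axis is the union of intervals $I_i = [c_i\cdot\vec v - r_i, c_i\cdot\vec v + r_i]$; if these intervals failed to form a single connected interval, a line perpendicular to $\vec v$ placed in a gap would separate the family without meeting any $D_i$, contradicting non-separability. A connected union of intervals has total length at most the sum $\sum_i 2r_i = 2r$, so the leftmost and rightmost points of $\bigcup_i I_i$ lie within distance $2r$ of each other. Applying this with $\vec v = (c_j - c_i)/|c_j - c_i|$ gives $|c_i - c_j| + r_i + r_j \le 2r$, that is, $|c_i - c_j| \le (r - r_i) + (r - r_j)$, which is exactly the condition $E_i \cap E_j \ne \emptyset$.

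The triple condition is where I expect the main obstacle. Three pairwise-intersecting disks in the plane need not have a common point, so the pairwise bound does not suffice, and the argument must use non-separability in a finer way. Geometrically, the triple condition says that any three $D_i, D_j, D_k$ together fit inside a disk of radius $r$, which beats the natural bound $r_i + r_j + r_k$ by the ``extra'' radii $r_l$ for $l \ne i, j, k$ supplied by the rest of the family. My plan is to prove this by combining the pairwise projection bound with a further consequence of non-separability on subfamilies: namely, that if the three dual disks $E_i, E_j, E_k$ failed to share a point, one could identify a direction $\vec v$ in which the three projected intervals leave a gap too wide to be bridged by the remaining disks, yielding a separating line after all and contradicting non-separability. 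If this direct route proves too delicate, a fall-back is an induction on $n$, deleting a single disk at a time while preserving non-separability of the residual family and tracking how the required center $O$ evolves; the base case $n = 1$ is trivial.
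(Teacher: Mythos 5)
The paper never proves Theorem~\ref{2goodman}; it simply cites Goodman and Goodman \cite{MR0013513} for it, so there is no ``paper's own proof'' to compare against directly. (The closest thing is Lemma~\ref{replacement}, described there as ``in the spirit of Theorem~\ref{2goodman}'': it realizes the same idea on the sphere by taking $\w := \sum_i \sin\al_i\,\uu_i$, the spherical analogue of the weighted centroid $\sum_i r_i c_i$ that Goodman and Goodman use.) So the relevant question is simply whether your argument is correct, and it is not yet complete.

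Your pairwise step is fine: projecting onto $\vec v = (c_j - c_i)/|c_j - c_i|$, non-separability forces the union of the projected intervals to be a single interval of length at most $\sum_l 2r_l = 2r$, which spans at least $|c_i - c_j| + r_i + r_j$; this gives $|c_i - c_j| \le (r - r_i) + (r - r_j)$, i.e.\ $E_i \cap E_j \ne \emptyset$. But Helly in the plane requires the \emph{triple} condition, and the mechanism you propose for it cannot get started. You suggest finding a direction in which the three projected intervals ``leave a gap too wide to be bridged.'' If you mean the projections of $E_i, E_j, E_k$, there is never a gap: these disks pairwise intersect (you just proved it), so their projections pairwise overlap in every direction and the union is always a single interval. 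If you mean the projections of the original $D_i, D_j, D_k$, a gap there is perfectly consistent with non-separability of the full family, since the other $D_l$'s bridge it; and the only quantitative control you extract from that bridging is again the pairwise length estimate, which you already know is insufficient for triples (three pairwise-intersecting disks need not meet). In short, the one tool you have set up -- one-dimensional projections of the full family -- yields exactly the pairwise inequalities and nothing finer, and the triple condition needs genuinely more. The fallback induction is also not an argument as written: removing a circle can destroy non-separability of what remains, so the inductive hypothesis need not apply to the residual family, and ``tracking how $O$ evolves'' is left unspecified.

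For reference, the Goodman--Goodman proof avoids Helly entirely. It fixes the candidate center as the weighted centroid of the $c_i$'s with weights $r_i$ and shows directly, via a merge-two-circles replacement step that preserves non-separability and total radius while reducing $n$, that this single disk of radius $r$ works. That replacement idea is exactly what Lemma~\ref{replacement} of the present paper transplants to the sphere, with $\w_i = \sin\al_i\,\uu_i$ playing the role of $r_i c_i$. If you want to salvage a Helly-type argument, you will have to find a way to encode the global non-separability of the whole family into a statement about a single triple $(i,j,k)$, which is precisely the step currently missing.
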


Its potential spherical relative should concern caps: a \emph{cap} of spherical
radius $r$ on the unit $d$-sphere is defined as the set of points within
spherical distance $r$ of a given point on the sphere. Recall that the
projective duality in $\R^{d+1}$ interchanges lines through the origin with
hyperplanes through the origin. By restricting to the $d$-sphere, we associate
a great sphere to every point on the sphere. Thus a zone can be viewed as the
dual of a cap. A result in reminiscence of Theorem~\ref{2goodman} follows
immediately from Theorem~\ref{main1} and projective duality.

\begin{corollary} \label{main_dual}
  Let the caps with spherical radii $r_1, \dots, r_n$ lie on the unit
  $d$-sphere. If every great sphere intersects at least one cap, then
  the total spherical radius $\sum_{i=1}^n r_i$ is at least $\pi /2$.
\end{corollary}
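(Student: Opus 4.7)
The plan is to apply Theorem~\ref{main1} after rewriting the cap-covering hypothesis as a zone-covering statement via projective duality. To each cap $D_i$ with center $q_i \in S^d$ and spherical radius $r_i$, I would associate the zone $Z_i$ of width $2r_i$ whose central hyperplane is the orthogonal complement $q_i^\perp \subset \R^{d+1}$, so that $\pm q_i$ are the poles of the central great sphere $C_{q_i} := q_i^\perp \cap S^d$.

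The one computation needed is the identity $d(p, C_{q_i}) = \bigl|\pi/2 - d(p, q_i)\bigr|$, valid for any $p \in S^d$ (a point at spherical distance $\theta$ from the pole $q_i$ lies at distance $|\pi/2 - \theta|$ from the equator $C_{q_i}$). Using it one verifies the chain
\[
  p \in Z_i \iff \bigl|\pi/2 - d(p, q_i)\bigr| \le r_i \iff d(q_i, C_p) \le r_i \iff C_p \cap D_i \ne \emptyset,
\]
where $C_p := p^\perp \cap S^d$ is the great sphere polar to $p$. Hence the hypothesis that every great sphere meets at least one cap is equivalent to the statement that the zones $Z_1,\dots,Z_n$ cover $S^d$, and Theorem~\ref{main1} yields $\sum_{i=1}^n 2r_i \ge \pi$, i.e.\ $\sum_i r_i \ge \pi/2$.

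The only bookkeeping point is the case in which some $r_i \ge \pi/2$: there the conclusion is immediate, so one may assume each $r_i < \pi/2$, whereupon each $Z_i$ is a genuine zone of width in $(0,\pi)$. Beyond this, no serious obstacle arises; the corollary amounts to unwinding the point/great-sphere duality. Observe that $D_{q_i}(r_i)$ and $D_{-q_i}(r_i)$ would map to the same zone $Z_i$, but this is harmless because $d(q_i, C_p) = d(-q_i, C_p)$, so both caps satisfy the same intersection condition with $C_p$, and in either case the radii contribute identically to the sum.
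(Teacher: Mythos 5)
Your proof is correct and is exactly the argument the paper has in mind: the paper states that the corollary ``follows immediately from Theorem~\ref{main1} and projective duality,'' and you have simply spelled out that duality explicitly, checking that $p\in Z_i \iff C_p\cap D_i\neq\emptyset$ so that the cap-piercing hypothesis is the same as the zones $Z_1,\dots,Z_n$ covering $S^d$. The handling of the degenerate case $r_i\ge\pi/2$ is a sensible extra precaution, but otherwise this matches the paper's route.
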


The proof of Theorem~\ref{main1} will be given in the next section. In
Section~\ref{corollaries}, we state a few more corollaries, one of which
answers in the affirmative the following question of Fejes T\'oth \cite{MR1537207} for the case of centrally symmetric domains:
\begin{quote}
  \textbf{Fejes T\'oth's zone problem.} Is it true that if a convex spherical
  domain is covered with a set of zones of total width $w$, then it can be
  covered with one zone of width $w$?
\end{quote}
We conclude, in Section~\ref{open}, with some brief remarks.

\section{Proof of Theorem~\ref{main1}}

Our proof is inspired by Bang \cite{MR0046672}, Goodman and Goodman
\cite{MR0013513}. Roughly speaking, the idea of the proof is to show that
\begin{enumerate}[noitemsep]
  \item Either a particular discrete subset not fully covered by the convex
  hulls (taken in $\R^{d+1}$) of the zones is contained in the unit ball;
  \item Or several zones can be covered and replaced by one zone without
  increasing the total width.
\end{enumerate}

In this section, we denote points by vectors leading to them from the
origin. We need the following technical lemmas, the first of which is in the
spirit of Theorem~\ref{2goodman}.

\begin{lemma}\label{replacement}
  Suppose the total spherical radius of a collection of caps $D_1, \dots, D_n$
  on $S^d$ is at most $\pi/2$. For all $i\in[n]$, let $\al_i$ be the spherical
  radius of $D_i$, let $\uu_i$ be the center of $D_i$, and set $\w_i :=
  \sin\al_i\uu_i$. Denote $\w := \sum_{i=1}^n \w_i$ and $\al :=
  \sum_{i=1}^n\al_i$. If
  \begin{equation}\label{replacement_cond1}
    \abs{\w} \ge \sin\al \text{ and } \abs{\w-\w_i} \le
    \sin\left(\al-\al_i\right)\text{ for all }i\in[n],
  \end{equation}
  then a cap of spherical radius $\le \al$ can cover $\cup_{i=1}^n D_i$,
  and moreover, a cap of spherical radius $< \al$ can cover $\cup_{i=1}^n D_i$
  except when
  \begin{equation}\label{replacement_cond2}
    \abs{\w} = \sin\al \text{ and } \abs{\w-\w_i} =
    \sin\left(\al-\al_i\right) \text{ for some }i\in[n].
  \end{equation}
\end{lemma}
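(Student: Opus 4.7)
The plan is to exhibit a single covering cap explicitly. The natural candidate has center $\uu := \w/\abs{\w}$, which is well-defined because $\abs{\w} \ge \sin\al > 0$ in the nontrivial case $\al > 0$ (I will tacitly assume each $\al_i > 0$ so that the degenerate point-cap case does not arise). By the spherical triangle inequality, the radius-$\al$ cap at $\uu$ contains $D_i$ precisely when $\ip{\uu}{\uu_i} \ge \cos(\al - \al_i)$, so the whole lemma reduces to verifying this scalar inequality for each $i$ and then tracking when it is strict.

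First I would expand $\abs{\w - \w_i}^2 = \abs{\w}^2 - 2\sin\al_i\,\ip{\w}{\uu_i} + \sin^2\al_i$ and combine with the hypothesis $\abs{\w - \w_i}^2 \le \sin^2(\al - \al_i)$ to get
\[
  2\sin\al_i\,\ip{\w}{\uu_i} \ge \abs{\w}^2 + \sin^2\al_i - \sin^2(\al - \al_i).
\]
The identity $\sin^2 A - \sin^2 B = \sin(A+B)\sin(A-B)$, applied with $A = \al_i$, $B = \al - \al_i$, collapses the right-hand side to $\abs{\w}^2 + \sin\al\sin(2\al_i - \al)$. Setting $c := \sin\al\sin(2\al_i - \al)$ and $g(t) := (t^2 + c)/(2t)$, dividing through by $\abs{\w}$ produces $\ip{\uu}{\uu_i} \ge g(\abs{\w})/\sin\al_i$. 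Because $0 \le \al_i \le \al \le \pi/2$ forces $\abs{2\al_i - \al} \le \al$, and hence $c \le \sin^2\al \le \abs{\w}^2$, the value $\abs{\w}$ sits in the range $t \ge \sqrt{c}$ where $g'(t) = (t^2 - c)/(2t^2) \ge 0$. Thus $g(\abs{\w}) \ge g(\sin\al)$, and a short calculation using $\sin\al + \sin(2\al_i - \al) = 2\sin\al_i\cos(\al - \al_i)$ shows that $g(\sin\al)/\sin\al_i = \cos(\al - \al_i)$. That establishes the covering claim.

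For the strict statement, I would trace the two sources of slack separately: the hypothesis $\abs{\w - \w_i} \le \sin(\al - \al_i)$, and the monotonicity step $g(\abs{\w}) \ge g(\sin\al)$. In any nondegenerate situation one has $c < \sin^2\al$, so $g$ is in fact strictly increasing on $[\sin\al,\infty)$; therefore if $\abs{\w} > \sin\al$ the monotonicity step is strict for every $i$, yielding $\ip{\uu}{\uu_i} > \cos(\al - \al_i)$ uniformly and allowing the cap at $\uu$ to be shrunk. If instead $\abs{\w} = \sin\al$ but every $\abs{\w - \w_i}$ is strictly less than $\sin(\al - \al_i)$, the hypothesis step is strict for every $i$, giving the same uniform strict bound. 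So the only obstruction to strict shrinking is exactly \eqref{replacement_cond2}.

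The main obstacle I expect is the trigonometric bookkeeping: guessing the auxiliary function $g$ so that its minimum over $t \ge \sin\al$ reproduces $\cos(\al - \al_i)$, and identifying where the assumption $\al \le \pi/2$ is really used (namely, to guarantee $c \le \sin^2\al$ and hence the monotonicity of $g$ on $[\sin\al,\infty)$). Once those two identities are in place, the equality analysis is routine casework.
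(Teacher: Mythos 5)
Your proof is correct and follows essentially the same route as the paper's: you take the same candidate cap centered at $\uu = \w/\abs{\w}$ of radius $\al$, and the heart of both arguments is the same law-of-cosines identity for $\abs{\w-\w_i}^2$ together with the observation that the resulting bound on $\ip{\uu}{\uu_i}$ is monotone in $\abs{\w}$ on $[\sin\al,\infty)$ and hits $\cos(\al-\al_i)$ exactly at $\abs{\w}=\sin\al$. The only cosmetic differences are that the paper argues by contradiction (assume some $D_i$ escapes, derive $\abs{\w-\w_i}>\sin(\al-\al_i)$) and packages the monotonicity by completing the square as $(\abs{\w}-\sin\al_i\cos\al')^2+\sin^2\al_i\sin^2\al'$, whereas you argue directly and package the same monotonicity via the auxiliary function $g(t)=(t^2+c)/(2t)$ and its derivative; the treatment of the strict case is likewise the same in substance.
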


\begin{remark}
  In case that \eqref{replacement_cond2} holds, by the law of sines,
  $\angle(\w, \w_i) = \al - \al_i$ (see Figure~\ref{triangle}).
\end{remark}

\begin{proof}
  We claim that the cap $D$ centered at $\uu := \w / \abs{\w}$ of spherical
  radius $\al$ covers $\cup_{i=1}^n D_i$. Suppose on the contrary that say
  $D_i$ is not covered by $D$. Clearly, the angle between $\uu$ and $\uu_i$ is
  strictly greater than $\al' := \al - \al_i$. By the law of cosines and the
  monotonicity of cosine on $(0, \pi)$, we get
  \begin{equation}\label{cosine}
    \begin{split}
      \abs{\w - \w_i}^2 = \abs{\w}^2 + \abs{\w_i}^2 -
      2\abs{\w}\abs{\w_i}\cos\angle(\uu, \uu_i) > \abs{\w}^2 + \sin^2{\al_i} -
      2\abs{\w}\sin{\al_i}\cos\al' \\
      = \left(\abs{\w}-\sin\al_i\cos\al'\right)^2 +
      \sin^2\al_i\sin^2\al'.
    \end{split}
  \end{equation}
  Notice that $\abs{\w} \ge \sin\al = \sin\al_i\cos\al' + \cos\al_i\sin\al' >
  \sin\al_i\cos\al'$. The right hand side of \eqref{cosine} is at least
  \[
    \left(\sin\al - \sin\al_i\cos\al'\right)^2 + \sin^2\al_i\sin^2\al' =
    \cos^2\al_i\sin^2\al' + \sin^2\al_i\sin^2\al' = \sin^2\al'.
  \]
  Therefore $\abs{\w-\w_i} > \sin\al' = \sin(\al-\al_i)$ contradicts
  \eqref{replacement_cond1}.
  
  Clearly if $\w > \sin\al$ or $\abs{\w-\w_i} < \sin(\al - \al_i)$, then the
  cap centered at $\uu$ of spherical radius $\al - \epsilon_i$ covers $D_i$ for
  some $\epsilon_i > 0$. Hence if $\abs{\w} > \sin\al$ or $\abs{\w-\w_i} <
  \sin(\al-\al_i)$ for all $i\in[n]$, then the cap centered at $\uu$ of
  spherical radius $\al-\min\dset{\epsilon_i}{i\in[n]}$ covers $\cup_{i=1}^n
  D_i$.
\end{proof}

\begin{lemma}\label{3caps}
  Suppose the total spherical radius of 3 caps $D_1, D_2, D_3$ is $\al \le
  \pi/2$. For all $i\in[3]$, let $\al_i$ be the spherical radius of $D_i$, let
  $\uu_i$ be the center of $D_i$, and set $\w_i := \sin\al_i\uu_i$. Suppose that
  \[
    \abs{\w_1 + \w_2} = \sin(\al_1 + \al_2) \text{ and }\abs{\w_1 + \w_2 + \w_3}
    = \sin\al.
  \]
  If $\w_1, \w_2, \w_3$ are not coplanar vectors, then a cap of spherical 
  radius $< \al$ can cover $D_1 \cup D_2 \cup D_3$.
\end{lemma}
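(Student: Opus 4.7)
The approach is to invoke Lemma~\ref{replacement} on $\{D_1,D_2,D_3\}$ to obtain a covering cap $D^*$ of spherical radius $\al$ centered at $\uu^* := (\w_1+\w_2+\w_3)/\sin\al$, and then perturb $\uu^*$ slightly toward $\uu_3$ to shrink its radius below $\al$. The hypothesis $\abs{\w_1+\w_2+\w_3} = \sin\al$ is assumed, and the $i=3$ instance of the second set of hypotheses reads $\abs{\w_1+\w_2} = \sin(\al_1+\al_2) = \sin(\al-\al_3)$, also assumed. What is missing—and what forces non-coplanarity into the picture—is the pair of inequalities $\abs{\w-\w_j} \le \sin(\al-\al_j)$ for $j=1,2$; securing these is the main obstacle.

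To dispose of it, set $\uu' := (\w_1+\w_2)/\sin(\al_1+\al_2)$. The same law-of-cosines manipulation that appears in the proof of Lemma~\ref{replacement} upgrades the identity $\abs{\w_1+\w_2} = \sin(\al_1+\al_2)$ to $\angle(\uu',\uu_2) = \al_1$, and the identity $\abs{\w_1+\w_2+\w_3} = \sin\al$ to $\angle(\uu',\uu_3) = \al$. Applying the reverse spherical triangle inequality to the triple $\uu_2, \uu', \uu_3$ then yields
\[
  \angle(\uu_2,\uu_3) \ge \angle(\uu',\uu_3) - \angle(\uu',\uu_2) = \al - \al_1 = \al_2 + \al_3,
\]
with equality precisely when $\uu_2$ lies on the shorter arc of the great circle through $\uu'$ and $\uu_3$, i.e., when $\uu_2 \in \mathrm{span}(\uu',\uu_3)$. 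Since $\uu' \in \mathrm{span}(\uu_1,\uu_2)$, this equality would place $\uu_3$ in $\mathrm{span}(\uu_1,\uu_2)$, contradicting the non-coplanarity of $\w_1,\w_2,\w_3$. Hence $\angle(\uu_2,\uu_3) > \al_2+\al_3$, and a second use of the law of cosines (with $\cos$ strictly decreasing on $[0,\pi]$) promotes this to $\abs{\w-\w_1} = \abs{\w_2+\w_3} < \sin(\al_2+\al_3) = \sin(\al-\al_1)$; by symmetry, $\abs{\w-\w_2} < \sin(\al-\al_2)$.

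All hypotheses of Lemma~\ref{replacement} now hold, so $D^*$ covers $D_1 \cup D_2 \cup D_3$. The same law-of-cosines calculation converts the above strict inequalities into $\angle(\uu^*,\uu_j) < \al-\al_j$ for $j=1,2$, while the remark following Lemma~\ref{replacement} applied with $i=3$ gives $\angle(\uu^*,\uu_3) = \al-\al_3$. Finally, let $\uu^{**}$ be the point on the geodesic from $\uu^*$ toward $\uu_3$ at distance $\epsilon > 0$. Then $\angle(\uu^{**},\uu_3) = \al-\al_3-\epsilon$, and the ordinary spherical triangle inequality yields $\angle(\uu^{**},\uu_j) \le \angle(\uu^*,\uu_j) + \epsilon$ for $j=1,2$. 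Taking $\epsilon>0$ small enough makes $\max_i\bigl(\angle(\uu^{**},\uu_i)+\al_i\bigr) < \al$, so the cap centered at $\uu^{**}$ of this radius covers $D_1 \cup D_2 \cup D_3$ and has spherical radius strictly less than $\al$.
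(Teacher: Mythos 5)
Your proof is correct, and it is essentially the paper's argument with different bookkeeping. Both proofs hinge on the same two ideas: (i) non-coplanarity of $\w_1,\w_2,\w_3$ forces a strict spherical triangle inequality, which gives $\angle(\uu^*,\uu_j) < \al - \al_j$ for $j=1,2$; and (ii) perturbing the center of the covering cap toward $\uu_3$ then makes all three angle inequalities strict, allowing the radius to shrink below $\al$. Where you differ is in the choice of triangle — you apply the reverse triangle inequality to $(\uu_2,\uu',\uu_3)$ where $\uu' = \w_{12}/\abs{\w_{12}}$, getting $\angle(\uu_2,\uu_3) > \al_2 + \al_3$ and then translating to $\abs{\w-\w_1} < \sin(\al-\al_1)$, while the paper applies the forward triangle inequality directly to the triple $(\w_{123},\w_{12},\w_1)$ — and in the perturbation, where you move $\uu^*$ a geodesic distance $\epsilon$ toward $\uu_3$, versus the paper's linear perturbation $\w(\epsilon) = \w_{123}+\epsilon\w_3$ in the ambient space followed by first-order Taylor analysis. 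Your version is slightly more modular in that the strict bounds are derived as "Lemma~\ref{replacement}-type" conditions before the cap is even constructed, and the final shrinking step becomes a metric (rather than first-order) argument, but the mathematical content is the same. One small imprecision: the equality case of the reverse triangle inequality is that $\uu_2$ lies on the geodesic arc from $\uu'$ to $\uu_3$, not merely in $\mathrm{span}(\uu',\uu_3)$; you only need the implication "equality $\Rightarrow$ coplanar," which is what you actually use, so this does not affect the argument.
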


\begin{proof}
  Denote $\w_{12} := \w_1 + \w_2$ and $\w_{123} := \w_1 + \w_2 + \w_3$. By the
  law of cosines and trigonometric identities, we obtain that
  \[
    \cos\angle(\w_{12}, \w_1)
    = \frac{\abs{\w_{12}}^2+\abs{\w_1}^2-\abs{\w_2}^2}{2\abs{\w_{12}}\abs{\w_1}}
    = \frac{\sin^2(\al_1+\al_2)+\sin^2\al_1-\sin^2\al_2}{2\sin(\al_1+\al_2)\sin\al_1}
    =\cos\al_2.
  \]
  Therefore $\angle(\w_{12}, \w_1) = \al_2$. Similarly, one can compute
$$\angle(\w_{12}, \w_2) = \al_1,\ \angle(\w_{123}, \w_{12}) = \al_3,\
\angle(\w_{123},\ \w_{3}) = \al_1 + \al_2.$$
  
  Suppose that $\w_1, \w_2, \w_3$ are not coplanar. Set $\w = \w(\epsilon) :=
  \w_{123} + \epsilon\w_3$. We claim that the cap $D = D(\epsilon)$ centered at
  $\w / \abs{\w}$ of spherical radius $<\al$ can cover $D_1 \cup D_2 \cup D_3$
  for some $\epsilon > 0$. Because $\w_1, \w_{12}, \w_{123}$ are not coplanar,
  by the spherical triangle inequality, we obtain
  \[
    \delta_1 := \angle(\w_{123}, \w_{12}) + \angle(\w_{12}, \w_1) -
    \angle(\w_{123}, \w_1) = \al_2 + \al_3 - \angle(\w_{123}, \w_1) > 0.
  \]
  Therefore there exists $c_1 \in \R$ such that
  \begin{equation}\label{w1}
    \angle(\w, \w_1) = \angle(\w_{123}+\epsilon\w_3, \w_1) = \angle(\w_{123},
    \w_1) + c_1\epsilon + O(\epsilon^2) = \al_2 + \al_3 - \delta_1 + c_1\epsilon
    + O(\epsilon^2).
  \end{equation}
  Similarly, $\delta_2 := \al_1 + \al_3 - \angle(\w_{123}, \w_2) > 0$ and there
  exists $c_2 \in \R$ such that
  \begin{equation}\label{w2}
    \angle(\w, \w_2) = \al_1 + \al_3 - \delta_2 + c_2\epsilon + O(\epsilon^2).
  \end{equation}
  Since $\w_{123}, \w_{12}, \w_3$ are coplanar, but not collinear, there exists
  $c_3 > 0$ such that
  \begin{equation}\label{w3}
    \angle(\w, \w_3) = \angle(\w_{123} + \epsilon\w_3, \w_3) =
    \angle(\w_{123}, \w_3) - c_3\epsilon + O(\epsilon^2) = \al_1 + \al_2 -
    c_3\epsilon + O(\epsilon^2),
  \end{equation}
  In (\ref{w1}, \ref{w2}, \ref{w3}), one can take sufficiently small $\epsilon
  > 0$ so that $\angle(\w, \w_i) < \al - \al_i$ for all $i\in [3]$. This proves
  the claim.
\end{proof}

\begin{remark}
  Because $\angle(\w_{123}, \w_1) < \al_2 + \al_3, \angle(\w_{123}, \w_2) <
  \al_1 + \al_3$ and $\angle(\w_{123}, \w_3) = \al_1 + \al_2$, the cap, say
  $D_{123}$, centered at $\w_{123}/\abs{\w_{123}}$ of spherical radius $\al =
  \al_1 + \al_2 + \al_3$ covers $\cup_{i=1}^3D_i$. The idea of the proof of
  Lemma~\ref{3caps} is to move the center of $D_{123}$ towards
  $\w_3/\abs{\w_3}$ slightly and shrink its spherical radius properly so that
  the cap would still cover $\cup_{i=1}^3D_i$ (see Figure~\ref{shrinking}).
\end{remark}

\begin{figure}
  \centering
  \begin{minipage}{.35\textwidth}
    \centering
    \begin{tikzpicture}[scale=1]
      \coordinate (O) at (0,0);
      \coordinate (A) at (1,1);
      \coordinate (B) at (0,3);
      \draw[darkgray] (O) -- (A) -- (B) -- (O)
        pic["", draw, angle eccentricity = 2, angle radius = 0.3cm]
          {angle=A--O--B}
        pic["", draw, angle eccentricity = 1.5, angle radius = 0.25cm]
          {angle=B--A--O}
        pic["$\al_i$", draw, angle eccentricity = 2.5, angle radius = 0.4cm]
          {angle=O--B--A};
      \node[anchor=north] at (0, 0) {$\al-\al_i$};
      \node[anchor=west] at (1, 1) {$\pi-\al$};
      \fill[darkgray] (0.08,0.19) circle (0.03cm);
      \fill[darkgray] (0.86,1.02) circle (0.03cm);
      \node[anchor=east] at (-0.1, 1.5) {$\abs{\w} = \sin\al$};
      \node[anchor=west] at (0.6, 2.25) {$\abs{\w - \w_i}$};
      \node[anchor=west] at (1, 1.7) {$=\sin(\al-\al_i)$};
      \node[anchor=west] at (0.6, 0.25) {$\abs{\w_i} = \sin\al_i$};
      \node at (0, 3.5) {};
      \node at (0, -0.5) {};
    \end{tikzpicture}
    \captionof{figure}{}
    \label{triangle}
  \end{minipage}%
  \begin{minipage}{.33\textwidth}
    \centering
    \def\ra{0.6}
    \def\rb{0.9}
    \def\rc{0.5}
    \def\tc{85}
    \def\d{0.2}
    \begin{tikzpicture}[scale=1.06]
      \coordinate (A) at (-\rb,0);
      \coordinate (B) at (\ra,0);
      \coordinate (C) at (\tc:\ra+\rb+\rc);
      \coordinate (D) at (0,0);
      \coordinate (E) at (\tc:\rc);
      \coordinate (F) at (\tc:\rc+\d);
      \fill[opacity=0.2] (A) circle (\ra);
      \fill[opacity=0.2] (B) circle (\rb);
      \fill[opacity=0.2] (C) circle (\rc);
      \fill[opacity=0.1] (\tc:\rc+\d) circle (\ra+\rb+\rc-\d);
      \fill[darkgray] (A) circle (0.05cm);
      \fill[darkgray] (B) circle (0.05cm);
      \fill[darkgray] (C) circle (0.05cm);
      \fill[darkgray] (D) circle (0.05cm);
      \fill[darkgray] (E) circle (0.05cm);
      \fill[darkgray] (F) circle (0.05cm);
      \draw[dashed, darkgray] (D) circle (\ra+\rb);
      \draw[dashed, darkgray] (E) circle (\ra+\rb+\rc);
      \draw[dashed, darkgray] (A) -- (B);
      \draw[dashed, darkgray] (D) -- (C);
      \node at (A) [below] {$\w_1$};
      \node at (B) [below right] {$\w_2$};
      \node at (C) [above] {$\w_3$};
      \node at (D) [below] {$\w_{12}$};
      \node at (E) [left] {$\w_{123}$};
      \node at (F) [right] {$\w$};
    \end{tikzpicture}
    \captionof{figure}{}
    \label{shrinking}
  \end{minipage}%
  \begin{minipage}{.32\textwidth}
    \centering
    \begin{tikzpicture}[scale=1.06]
      \coordinate (O) at (3.5,0);
      \coordinate (A) at (0.5,1.5);
      \coordinate (B) at (0.5,-1);
      \draw[darkgray, dashed] (0,-1.25) -- (4,-1.25);
      \draw[darkgray, dashed] (0,1.25) -- (4,1.25);
      \draw[darkgray, dashed] (0,0) -- (4,0);
      \draw[darkgray] (0.5, 0.2) -- (0.7, 0.2) -- (0.7, 0);
      \fill[darkgray] (O) circle (0.05);
      \fill[darkgray] (A) circle (0.05);
      \fill[darkgray] (B) circle (0.05);
      \draw[-latex, darkgray, shorten >= 0.05] (O) -- (A);
      \draw[darkgray] (B) -- (A);
      \draw[-latex, darkgray, shorten >= 0.05] (O) -- (B);
      \node at (4,0) [above] {$H_i$};
      \node at (2, 0.75) [above right] {$\w$};
      \node at (2, -0.5) [below right] {$\w - 2\epsilon_i\w_i$};
      \node at (0,0) [above] {$2\abs{\w_i}$};
      \node at (0,2.2) {};
      \node at (0,-1.57) {};
    \end{tikzpicture}    
    \captionof{figure}{}
    \label{hyperplane}
  \end{minipage}
\end{figure}

\begin{proof}[Proof of Theorem~\ref{main1}]
  Suppose that the total width of a collection of zones $P_1, \dots, P_n$ on
  $S^d$ is less than $\pi$. Let $2\al_i$ be the width of $P_i$, let $H_i$ be the
  central hyperplane of $P_i$, and let $\uu_i$ be a unit normal vector of $H_i$.
  Set $\w_i := \sin\al_i \uu_i$ for all $i\in[n]$. Consider the set
  \[
    L := \dset{\sum_{i=1}^n\epsilon_i\w_i}{(\epsilon_1, \dots, \epsilon_n) \in
    \sset{\pm 1}^n}.
  \]
  Let $\w := \sum_{i=1}^n \epsilon_i\w_i$ achieve the maximal norm among points
  of $L$. For every $i\in [n]$, using the fact that $\abs{\w} \ge \abs{\w -
  2\epsilon_i\w_i}$, one can easily see that the distance from $\w$ to $H_i$ is
  at least $\abs{\w_i} = \sin\al_i$ (see Figure~\ref{hyperplane}).
  
  If $\abs{\w} < 1$, then $\w / \abs{\w} \notin \cup_{i=1}^n P_i$. Otherwise,
  $\abs{\w} \ge 1$. Without loss of generality, we may assume that $\epsilon_1
  = \dots = \epsilon_n = 1$. In particular, $\abs{\sum_{i=1}^n \w_i} >
  \sin\left(\sum_{i=1}^n\al_i\right)$ because the total width $2\al_1 + \dots +
  2\al_n < \pi$. Find the minimal set $I \subset [n]$ such that
  $\abs{\sum_{i\in I}\w_i} > \sin\left(\sum_{i\in I}\al_i\right)$. For each
  $i\in I$, denote by $D_i$ the cap centered at $\uu_i$ of spherical radius
  $\al_i$. By the minimality of $I$, we can apply the first part of
  Lemma~\ref{replacement} to $\dset{D_i}{i\in I}$ to find a cap, say $D$,
  covering $\cup_{i\in I}D_i$ of spherical radius $\le \sum_{i\in I}\al_i$.
  Using the projective duality, one can check that the dual of $D_i$ is $P_i$
  and the dual of $D$ is a zone, say $P$, of width $\le \sum_{i\in I}\al_i$.
  Moreover, because $\cup_{i\in I} D_i$ is covered by $D$ and the projective
  duality preserves containment, $\cup_{i\in I} P_i$ is covered by $P$.
  Clearly, $\abs{I} > 1$. Now we can reduce the number of zones by replacing
  $\dset{P_i}{i\in I}$ by $P$ without increasing the total width, and we repeat
  the above argument for zones $\sset{P} \cup \dset{P_i}{i\in[n]\setminus I}$.
  
  Finally, we investigate the case when equality holds. Suppose that zones
  $P_1, \dots, P_n$ of total width $2\al = \pi$ cover $S^d$. For all $i \in
  [n]$, let $D_i$ be the dual cap\footnote{Strictly speaking, the dual of a
  zone $P$ is a pair of equal caps. However, if a normal vector $\uu$ of the
  central hyperplane $H$ of $P$ is chosen, then the dual cap of $P$ is defined
  as one of these caps in the half-space determined by $H$ containing $\uu$.}
  of $P_i$. From the previous argument, we must have $\abs{\w} = 1 = \sin\al$.
  Moreover, for any $I \subset [n]$, caps $\dset{D_i}{i\in I}$ cannot be
  covered by a cap of spherical radius $< \sum_{i\in I}\al_i$ since otherwise
  we can cover the sphere with zones of total width $<\pi$.
  
  The second part of Lemma~\ref{replacement} shows that $\abs{\w - \w_i} =
  \sin(\al - \al_i)$ for some $i\in[n]$. Without loss of generality, assume that
  $\abs{\w - \w_n} = \sin(\al - \al_n)$. Again, apply the second part of
  Lemma~\ref{replacement} to $D_1, \dots, D_{n-1}$, it must be the case that
  $\abs{\w - \w_n - \w_i} = \sin(\al - \al_n - \al_i)$ for some $i\in[n-1]$.
  After repeating this argument for $(n-1)$ times, we can assume, without loss
  of generality, that
  \[
    \abs{\w_1 + \dots + \w_i} = \sin(\al_1 + \dots + \al_i) \text{ for all
    }i\in[n].
  \]
  
  Since $\abs{\w_1 + \w_2} = \sin(\al_1 + \al_2)$, $\abs{\w_1 + \w_2 + \w_3} =
  \sin(\al_1 + \al_2 + \al_3)$ and $D_1, D_2, D_3$ cannot be covered by a cap
  of spherical radius $<\al_1 + \al_2 + \al_3$, Lemma~\ref{3caps} implies that
  $\w_1, \w_2, \w_3$ are coplanar. Now let $\w_2' := \w_1 + \w_2$ and $D_2'$ be
  the cap centered at $\w_2' / \abs{\w_2'}$ of spherical radius $\al_2' :=
  \al_1 + \al_2$. One can check that $D_2'$ cover $D_1, D_2$, and so $D_2',
  D_3, D_4$ cannot be covered by a cap with radius $<\al_2' + \al_3 + \al_4$.
  Since, in addition, $\abs{\w_2' + \w_3} = \sin(\al_2' + \al_3)$ and
  $\abs{\w_2' + \w_3 + \w_4} = \sin(\al_2' + \al_3 + \al_4)$, Lemma~\ref{3caps}
  implies that $\w_2', \w_3, \w_4$ are coplanar, hence $\w_1, \w_2, \w_3, \w_4$
  are coplanar. Repeating this argument, we obtain that $\w_1, \dots, \w_n$ are
  coplanar. This reduces the problem for $S^d$ to $S^1$, and it is easy to
  check the equality condition for $S^1$.
\end{proof}

\begin{remark}
  Alternatively, to find a point of $L$ that is not covered with the convex
  hulls of the zones, one could use Bang's approach~\cite{MR0046672} by
  optimizing a cleverly chosen quadratic function on $\sset{\pm 1}^n$. Our
  proof makes use of the neat trick, due to Bogn\'ar~\cite{MR0133057}, by
  picking the vector achieving the maximal norm among points of $L$.
\end{remark}

\section{Other corollaries}\label{corollaries}

The following two corollaries follow easily from Theorem~\ref{main1}. We call a
pair of equal caps \emph{an antipodal cap} if they are antipodal to each other.

\begin{corollary}
  Given a family of antipodal caps of radii $r_1, \dots, r_n$ on the unit
  $d$-sphere, it is always possible to find a point common to them if the total
  spherical radius $\sum_{i=1}^n r_i$ is at least $(n-1)\frac{\pi}{2}$.
\end{corollary}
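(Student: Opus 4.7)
The plan is to pass to the complements of the antipodal caps in $S^d$ and invoke Theorem~\ref{main1}. First I would observe that if the $i$-th antipodal cap has centers $\pm\uu_i$ and spherical radius $r_i$, then its complement in $S^d$ is $\dset{x \in S^d}{r_i < \dist(x,\uu_i) < \pi - r_i}$, which is precisely the \emph{open} zone of width $\pi - 2r_i$ about the great sphere $C_i$ perpendicular to $\uu_i$. Consequently, the failure of the conclusion is equivalent to these open zones covering $S^d$.

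Before applying Theorem~\ref{main1}, I would dispose of the trivial cases: any $r_i \ge \pi/2$ makes the corresponding antipodal cap equal to all of $S^d$ (since $\min(\dist(x,\uu_i),\dist(x,-\uu_i)) \le \pi/2$ for every $x$), so it may be discarded from the family; this reduces $n$ by $1$ and drops the total radius by at least $\pi/2$, which preserves the hypothesis $\sum r_i \ge (n-1)\pi/2$. I therefore assume $r_i < \pi/2$ for all $i$.

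Now assume for contradiction that no common point exists, so the complementary open zones cover $S^d$. A direct application of Theorem~\ref{main1} to the corresponding \emph{closed} zones of widths $\pi - 2r_i$ would yield $\sum(\pi - 2r_i) \ge \pi$, i.e.\ $\sum r_i \le (n-1)\pi/2$, which is merely consistent with the hypothesis at equality and does not quite produce a contradiction. This borderline is the main technical obstacle. My proposed remedy is a compactness argument: the function $f(x) := \max_i\bigl[(\pi/2 - r_i) - \dist(x, C_i)\bigr]$ is continuous and, by the open covering, strictly positive on the compact set $S^d$, so it attains a positive minimum $m > 0$. Hence the closed zones of widths $\pi - 2r_i - 2m$ still cover $S^d$, and Theorem~\ref{main1} now gives $\sum(\pi - 2r_i - 2m) \ge \pi$, which rearranges to $\sum r_i \le (n-1)\pi/2 - nm < (n-1)\pi/2$, the desired contradiction.
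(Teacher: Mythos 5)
Your approach is the same as the paper's: pass to the complements of the antipodal caps (which are zones) and invoke Theorem~\ref{main1}. You correctly notice a subtlety the paper's one-line proof glosses over: the complement of a closed antipodal cap is an \emph{open} zone, so the naive contrapositive only gives $\sum r_i \le (n-1)\pi/2$, leaving the equality case unresolved. Your compactness/Lebesgue-number fix is a sensible way to repair this; an alternative within the paper's toolkit is to use the equality characterization in Theorem~\ref{main1}: if the closed zones cover with $\sum(\pi - 2r_i)=\pi$, the $\ell_i$'s must be coplanar in the prescribed tight configuration, and one checks the open zones then miss the boundary great sphere, so a common point survives.

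Two small gaps remain. First, in the compactness step the shrunk widths $\pi - 2r_i - 2m$ need not all be nonnegative, since $m$ is only bounded above by $\max_i(\pi/2 - r_i)$; the displayed inequality ``$\sum(\pi - 2r_i - 2m) \ge \pi$'' is therefore not literally an application of Theorem~\ref{main1}. The repair is easy: for each $x$ the witnessing index $i$ automatically satisfies $\pi/2 - r_i - m \ge \dist(x,C_i) \ge 0$, so restricting to $J := \dset{i}{\pi/2 - r_i - m \ge 0}$ the closed zones with indices in $J$ already cover; Theorem~\ref{main1} gives $\sum_{i\in J}(\pi-2r_i-2m)\ge\pi$, and bounding $r_i < \pi/2$ for $i\notin J$ still yields $\sum_i r_i < (n-1)\pi/2$. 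Second, your preliminary reduction ``discard any cap with $r_i \ge \pi/2$'' only preserves the hypothesis when $r_i = \pi/2$ exactly; if $r_i > \pi/2$ strictly, the total drops by more than $\pi/2$ and the induction hypothesis can fail (and indeed the statement is false for such degenerate inputs, e.g.\ $r_1 = \pi$, $r_2 = r_3 = 0$). So the corollary should be read with the implicit normalization $r_i \le \pi/2$, under which your reduction is fine.
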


\begin{proof}
  Apply Theorem~\ref{main1} to the complement of the antipodal caps.
\end{proof}

\begin{corollary}
  Given a family of great spheres $C_1, \dots, C_n$ on the unit $d$-sphere, it
  is always possible to find an open cap of spherical radius $\pi/2n$ not
  intersecting any $C_i$.
\end{corollary}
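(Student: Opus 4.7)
The plan is to reduce the claim to a non-covering statement about zones and invoke Theorem~\ref{main1}. First I would observe that the open cap of spherical radius $\pi/(2n)$ centred at a point $p \in S^d$ avoids the great sphere $C_i$ if and only if $\dist(p, C_i) \ge \pi/(2n)$: if this distance is at least $\pi/(2n)$, then every point $q$ in the open cap satisfies $\dist(q, C_i) \ge \dist(p, C_i) - \dist(p, q) > 0$; conversely, the nearest point of $C_i$ to $p$ would lie strictly inside the open cap if this distance were less than $\pi/(2n)$. Hence it suffices to produce a point of $S^d$ not covered by any of the \emph{open} zones of width $\pi/n$ around $C_1, \dots, C_n$.

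Second, I would suppose for contradiction that these open zones do cover $S^d$. Since $S^d$ is compact and the zones are open, the continuous function
\[
  f(x) := \max_{i \in [n]} \left( \frac{\pi}{2n} - \dist(x, C_i) \right)
\]
is strictly positive on $S^d$ and therefore attains a positive minimum $\delta > 0$. Consequently, every $x \in S^d$ lies within spherical distance $\pi/(2n) - \delta$ of some $C_i$, meaning that the \emph{closed} zones around $C_1, \dots, C_n$ of width $\pi/n - 2\delta$ already cover $S^d$. Their total width equals $\pi - 2n\delta < \pi$, contradicting Theorem~\ref{main1}. Thus the open zones cannot cover $S^d$, and any uncovered point furnishes the centre of the desired open cap.

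The chief subtlety, and the reason the argument is not a one-liner, is that Theorem~\ref{main1} forbids coverings of total width \emph{strictly} less than $\pi$, while closed zones of width exactly $\pi/n$ have total width exactly $\pi$ and in principle may cover (indeed, in the equality case of Theorem~\ref{main1} they do). Passing to open zones and then extracting positive slack via compactness is the small but essential manoeuvre that circumvents this obstacle.
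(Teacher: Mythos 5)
Your proof is correct and takes essentially the same route as the paper, which simply says ``apply Theorem~\ref{main1} to the zones of width $\pi/n$ with central great spheres $C_1, \dots, C_n$.'' The one genuinely useful observation in your write-up is the one you flag yourself: since the closed zones of width $\pi/n$ have total width exactly $\pi$, Theorem~\ref{main1} does not immediately forbid them from covering. The paper's one-liner implicitly relies either on the equality characterization in Theorem~\ref{main1} (in the tight coplanar configuration the closed zones tile $S^d$ and the boundary points of the tiling are at distance $\ge \pi/2n$ from every $C_i$) or on precisely the compactness slack argument you spell out. Your version, passing to open zones and extracting a uniform $\delta > 0$ from compactness, cleanly sidesteps the equality case and is arguably the cleaner way to make the paper's terse proof rigorous.
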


\begin{proof}
  Apply Theorem~\ref{main1} to the zones of width $\pi/n$ with central great
  spheres $C_1, \dots, C_n$.
\end{proof}

A spherical domain $W$ is \emph{centrally symmetric} with respect to a point
$c$ if for every $a \in W$ there is $b \in W$ such that $c$ is the middle point
of the shortest arc connecting $a$ and $b$. The following corollary partially
confirms Fejes T\'oth's zone problem.

\begin{corollary}\label{ft}
  If a centrally symmetric convex spherical domain is covered with a set of
  zones of total width $w$, then it can be covered with one zone of width $w$.
  In particular, the total width of zones covering a cap of spherical radius
  $r$ is at least $2r$.
\end{corollary}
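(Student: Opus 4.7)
Plan. The strategy reduces everything to Theorem~\ref{main1} by augmenting the given covering of $W$ (or an inscribed cap) with one carefully chosen complementary zone, leveraging the antipodal symmetry $P = -P$ enjoyed by every zone on $S^d$.

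I first dispose of the ``in particular'' clause. Suppose zones $P_1, \ldots, P_n$ of total width $w$ cover a cap $B_r(c)$ of spherical radius $r$. Since $P_i = -P_i$ for each $i$, the family also covers $B_r(-c) = -B_r(c)$, hence covers $B_r(c) \cup B_r(-c) = \{x \in S^d : |\langle x, c\rangle| \ge \cos r\}$. The complement of this union is contained in the zone $Q := \{x : |\langle x, c\rangle| \le \cos r\}$ of width $\pi - 2r$, whose central hyperplane is $c^\perp$. Adjoining $Q$ yields a covering of $S^d$ of total width $w + (\pi - 2r)$, so Theorem~\ref{main1} gives $w + (\pi - 2r) \ge \pi$, i.e.\ $w \ge 2r$.

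For the general case, let $W$ be centrally symmetric about $c$ and covered by zones of total width $w$, and let $r$ denote the inradius of $W$ about $c$ (the largest $r$ with $B_r(c) \subseteq W$). Applying the cap case to $B_r(c)$ yields $w \ge 2r$. It will suffice to show $W$ itself lies inside some zone of width $2r$, since such a zone can be inflated (preserving its central great sphere) to width $w \ge 2r$ while still covering $W$.

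This last claim is the spherical analog of the Euclidean identity ``width equals twice inradius'' for centrally symmetric convex bodies. By maximality of $r$, the cap $B_r(c)$ is tangent to $\partial W$ at some point $y^* = (\cos r) c + (\sin r) v^*$, where $v^* \in c^\perp$ is a unit vector. Strict convexity of $B_r(c)$ forces the supporting great sphere of $W$ at $y^*$ to coincide with the tangent great sphere of $B_r(c)$ at $y^*$, whose unit normal is $\uu^* = -(\sin r) c + (\cos r) v^*$. Thus $\langle x, v^*\rangle \le \tan r \cdot \langle x, c\rangle$ for all $x \in W$, and the central symmetry about $c$ (which negates $v^*$ since $v^* \perp c$) yields the reverse inequality. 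A short trigonometric check, using that the radial extent of $W$ from $c$ in each direction $v \in c^\perp$ is at least $r$, upgrades these to $|\langle x, v^*\rangle| \le \sin r$ for every $x \in W$; hence $W$ lies in the zone of width $2r$ with normal $v^*$. The main technical point is this last step: identifying the supporting great sphere at $y^*$ via tangency and converting the affine support inequality into the zone inequality via the trigonometric manipulation.
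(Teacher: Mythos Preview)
Your proof is correct and follows essentially the same approach as the paper's: both adjoin the complementary zone of width $\pi - 2r$ (with $r$ the inradius of $W$ at its center $c$) to invoke Theorem~\ref{main1}, and both then observe that $W$ lies in the zone of width $2r$ whose central great sphere passes through $c$ perpendicular to the direction of tangency---the paper simply leaves this last step as ``one can check,'' while you spell out the supporting-great-sphere argument. One small imprecision: the hint you give for the final ``trigonometric check'' (radial extent $\ge r$ in every direction) is not what is actually used; the clean way to upgrade $|\ip{x}{v^*}| \le \tan r \cdot \ip{x}{c}$ to $|\ip{x}{v^*}| \le \sin r$ is to split on whether $\ip{x}{c} \le \cos r$ (then apply the inequality directly) or $\ip{x}{c} > \cos r$ (then use $\ip{x}{v^*}^2 \le 1 - \ip{x}{c}^2 < \sin^2 r$).
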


\begin{proof}
  Suppose a convex spherical domain $W$ is centrally symmetric with respect to
  $c$, and is covered with a set of zones $P_1, \dots, P_n$ of total width $w$.
  Suppose $D$ is the largest cap contained in $W$ and centered at $c$. Let $r$
  be the spherical radius of $D$. Let $D'$ be the equal cap that is antipodal
  to $D$. The complement of $D\cup D'$ is a zone of width $\pi - 2r$, which
  together with $P_1, \dots, P_n$ covers the sphere. Theorem~\ref{main1} says
  that $\pi - 2r + w \ge \pi$ or equivalently $2r \le w$. Let $D$ touch the
  boundary of $W$ at $a, b$ which are centrally symmetric with respect to $c$.
  One can check that the zone of width $2r$ with central great circle passing
  through $c$ and perpendicular to arc $ab$ covers $W$.
\end{proof}

\section{Concluding remarks}\label{open}

Besides Fejes T\'oth's zone problem at the end of Section~\ref{intro}, let us
mention four more conjectures related to the theme of the article. We believe
the following strengthening of Theorem~\ref{main1} holds.

\begin{conjecture}
  A spherical segment is the solid defined by cutting the unit ball with a pair
  of parallel planes. Its width is the length of the shortest arc on the sphere
  whose endpoints touch both parallel planes. If the unit ball is covered with
  a collection of spherical segments, then the total width of the spherical
  segments is at least $\pi$.
\end{conjecture}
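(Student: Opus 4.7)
The natural approach is to lift the problem to a higher-dimensional sphere and then to generalize Theorem~\ref{main1}. I would identify $B^{d+1}$ with the closed upper hemisphere of $S^{d+1}\subset\R^{d+2}$ via the orthogonal projection $(z_1,\dots,z_{d+2})\mapsto(z_1,\dots,z_{d+1})$. A spherical segment in $B^{d+1}$ bounded by the parallel planes $\ip{y}{u}=a$ and $\ip{y}{u}=b$ pulls back to the slab $\tilde P=\{z\in S^{d+1}:a\le\ip{z}{(u,0)}\le b\}$; since its defining hyperplanes have no $(d+2)$-component, $\tilde P$ is symmetric under the equatorial reflection, and a short computation along the geodesic through $(u,0)$ shows that the spherical width of $\tilde P$ on $S^{d+1}$ equals the width of the original segment. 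The conjecture thus reduces to proving the following generalization of Theorem~\ref{main1}: if slabs $\tilde P_1,\dots,\tilde P_n$, each bounded by an arbitrary (possibly non-central) pair of parallel hyperplanes of $\R^{d+2}$, cover $S^{d+1}$, then $\sum_i\omega_i\ge\pi$.

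I would then mimic the proof of Theorem~\ref{main1} in this enlarged setting. For each slab $\tilde P_i$ with unit normal $u_i$ and heights $a_i\le b_i$, the complement $S^{d+1}\setminus\tilde P_i$ splits as $D_i^+\cup D_i^-$, where $D_i^+$ is the cap at $u_i$ of spherical radius $\arccos b_i$ and $D_i^-$ the cap at $-u_i$ of spherical radius $\pi-\arccos a_i$. Non-coverage is equivalent to the existence of a sign pattern $\epsilon\in\{+,-\}^n$ with $\bigcap_i D_i^{\epsilon_i}\ne\emptyset$. I would assign to each cap the representative vector $w_i^+:=\sqrt{1-b_i^2}\,u_i$ or $w_i^-:=-\sqrt{1-a_i^2}\,u_i$, form $w(\epsilon)=\sum_i w_i^{\epsilon_i}$, and pick $\epsilon$ maximizing $\abs{w(\epsilon)}$. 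A Bogn\'ar-style flip of a single coordinate should then yield, for every $j$, a one-sided lower bound on $\ip{w}{u_j}$ whose sign is determined by $\epsilon_j$; if those bounds force $w/\abs{w}$ outside every slab, we are done. Otherwise, as in Lemmas~\ref{replacement} and~\ref{3caps}, I would show that a suitable subcollection of slabs merges into a single slab of no larger total width, eventually reducing the problem to the one-dimensional parallel configuration, where $\sum_i\omega_i\ge\pi$ is immediate from $\sum_i\int_{a_i}^{b_i}dt/\sqrt{1-t^2}\ge\int_{-1}^1 dt/\sqrt{1-t^2}=\pi$.

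The technically hardest step is to push the Bang--Bogn\'ar trick through despite the asymmetry of the two complementary caps. In Theorem~\ref{main1} the two caps of a zone are antipodal with equal radius, so $w_i^+$ and $w_i^-$ are antipodal with equal norm and the sign flip is perfectly symmetric. For an off-center slab the two caps have different radii $\arccos b_i$ and $\pi-\arccos a_i$, the vectors $w_i^\pm$ have different norms, and the lower bound produced by the max-norm flip carries an additive offset that no longer matches the quantity $b_i$ (or $a_i$) needed to certify $y\notin\tilde P_i$. Absorbing this offset by shifting the whole construction by a single vector $v$ with $\ip{v}{u_j}=(a_j+b_j)/2$ for every $j$ is blocked whenever the $u_j$ are overdetermined, so I expect that either a weighted sign optimization or a strengthened version of Lemma~\ref{replacement} tailored to asymmetric cap pairs will be required. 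The equality case should then be read off from the equality case of this strengthened lemma, exactly as the coplanar equality case of Theorem~\ref{main1} emerges from Lemmas~\ref{replacement} and~\ref{3caps}, and should correspond to parallel segments partitioning the ball.
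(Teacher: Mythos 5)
This statement is a conjecture, not a theorem: the paper proposes it in Section~\ref{open} and leaves it open, so there is no proof of record to compare against. Evaluated on its own terms, your proposal is an honest outline with a genuine, unfilled gap.

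The lift of $B^{d+1}$ to the upper hemisphere of $S^{d+1}$ is correct, and so is the width bookkeeping: along a great circle through $(u,0)$ one has $\ip{z}{(u,0)}=\cos\theta$, so the lifted band $\{z\in S^{d+1}:a\le\ip{z}{(u,0)}\le b\}$ has spherical width $\arccos a-\arccos b$, matching the segment. But notice what you have reduced to: covering $S^{d+1}$ by bands bounded by arbitrary (in general non-central) parallel hyperplanes. That is exactly an instance, one dimension up, of the paper's \emph{second} conjecture in Section~\ref{open}, which the authors introduce as an ``even stronger version'' and also leave open. You do not exploit the one piece of extra structure the lift gives you (every normal $(u_i,0)$ is orthogonal to $e_{d+2}$, so each band is symmetric under $z_{d+2}\mapsto -z_{d+2}$), so the reduction moves the difficulty sideways rather than downward.

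The real gap is the one you flag yourself, and it is more serious than a ``technical offset.'' In Theorem~\ref{main1} the Bogn\'ar flip certifies non-membership because $\w_i=\sin\al_i\uu_i$ has norm equal to the zone's slab half-height $\sin\al_i$, so $\epsilon_i\ip{\w}{\uu_i}\ge\abs{\w_i}=\sin\al_i$ together with $\abs{\w}<1$ immediately pushes $\w/\abs{\w}$ out of the zone. For an off-center band $[a_i,b_i]$ there is no choice of $w_i^{\pm}$ that reproduces this. Your choice $w_i^{+}=\sqrt{1-b_i^2}\,u_i$, $w_i^{-}=-\sqrt{1-a_i^2}\,u_i$ gives, after maximizing $\abs{w}$, only $\ip{w}{u_j}\ge\tfrac12\bigl(\sqrt{1-a_j^2}+\sqrt{1-b_j^2}\bigr)$ when $\epsilon_j=+$, and this does not dominate $b_j$ when $b_j$ is close to $1$ and $a_j$ close to $b_j$; the alternative Bang-style choice $w_j^{\pm}\in\{a_ju_j,b_ju_j\}$ gives $\ip{w}{u_j}\ge\tfrac{b_j-a_j}{2}$, which again falls short of $b_j$ unless $a_j\le -b_j$. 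Your proposed re-centering vector $v$ with $\ip{v}{u_j}=(a_j+b_j)/2$ for all $j$ is, as you note, overdetermined once $n>d+2$. Finally, you cannot route the merging step through Lemma~\ref{replacement} as stated: that lemma assumes the total cap radius is at most $\pi/2$ and uses monotonicity of $\sin$ on $[0,\pi/2]$, but your caps $D_i^{\pm}$ are the \emph{complements} of the bands, not the paper's dual caps, and already for zones their total radius is $\sum(\pi/2-\al_i)>\pi/2$ once $n\ge2$; moreover $D_i^{-}$ has radius $\pi-\arccos a_i>\pi/2$ whenever $a_i<0$. Until a concrete weighted sign optimization or an asymmetric analogue of Lemma~\ref{replacement} is actually produced, the proposal does not constitute a proof of the conjecture.
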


We believe that an even stronger version holds.

\begin{conjecture}
  If the unit sphere is covered with a collection of spherical segments, then
  the total width of the spherical segments is at least $\pi$.
\end{conjecture}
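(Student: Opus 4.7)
My plan is to extend the Bang--Bogn\'ar maximum-norm argument driving Theorem~\ref{main1} to asymmetric spherical planks. Parameterize each segment as $S_i = \dset{x \in \R^{d+1}}{\abs{x} \le 1,\ a_i \le \ip{x}{\uu_i} \le b_i}$ with midpoint $m_i = (a_i+b_i)/2$, so its width equals $w_i = \arccos a_i - \arccos b_i$. By replacing $\uu_i$ with $-\uu_i$ whenever necessary, we may assume $m_i \ge 0$ for every $i$, which yields $b_i \ge 0$ and $-b_i \le a_i$. Suppose for contradiction that $\sum_i w_i < \pi$.

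Consider the set $L := \dset{\sum_i \epsilon_i b_i \uu_i}{(\epsilon_1, \dots, \epsilon_n) \in \sset{\pm 1}^n}$ and let $\w = \sum_i \epsilon_i b_i \uu_i$ be a point of $L$ of maximal norm. The standard swapping argument $\abs{\w} \ge \abs{\w - 2\epsilon_i b_i \uu_i}$ gives $\epsilon_i \ip{\w}{\uu_i} \ge b_i$ for every $i$, and combined with $-b_i \le a_i$ this forces $\ip{\w}{\uu_i} \notin (a_i, b_i)$. If $\abs{\w} < 1$, then $\w/\abs{\w} \in S^d$ satisfies $\ip{\w/\abs{\w}}{\uu_i} \notin [a_i, b_i]$ strictly (since $b_i > 0$) for every $i$, contradicting that $S_1, \dots, S_n$ cover $S^d$.

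Hence $\abs{\w} \ge 1$, and from here the proof would follow the pattern of Theorem~\ref{main1} by replacing a subset $\dset{S_i}{i \in I}$ by a single segment of no greater total width while preserving the covering of $S^d$, then iterating until only one segment remains and contradicting $\sum_i w_i < \pi$. For zones, the reduction is supplied by projective duality with caps together with Lemma~\ref{replacement}. For a general spherical segment, however, the complement of $S_i \cap S^d$ is a pair of antipodal caps of \emph{unequal} radii centered at $\pm \uu_i$ with radii summing to $\pi - w_i$, so the required replacement lemma must govern coverings of unions of such asymmetric cap-pairs by a single pair rather than the single-cap picture handled by Lemma~\ref{replacement}.

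The main obstacle is establishing this two-sided replacement lemma. I expect it can be formulated by tracking the upper and lower bounding planes separately, yielding a pair of coupled conditions that generalize \eqref{replacement_cond1}, and then adapting the law-of-cosines calculation of Lemma~\ref{replacement} with Lemma~\ref{3caps} and the coplanarity reduction dealing with the tightness case. The base case on $S^1$ is immediate: each segment intersects $S^1$ in two arcs of total length $2 w_i$, so any cover of $S^1$ by such arcs forces $\sum_i 2 w_i \ge 2\pi$, matching the desired bound.
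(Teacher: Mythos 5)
The statement you set out to prove is left as an open conjecture in the paper's concluding remarks; there is no proof in the paper to compare against, so the only question is whether your argument stands on its own. It does not, and you say so yourself.

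Your first half is a sensible adaptation of the Bogn\'ar maximum-norm trick: using coefficients $b_i$ (the upper support value, after normalizing $m_i \ge 0$) in place of $\sin\al_i$, the swap inequality $\epsilon_i\ip{\w}{\uu_i}\ge b_i$ together with $-b_i\le a_i$ does place $\w/\abs{\w}$ strictly outside every segment whenever $\abs{\w}<1$. That step checks out.

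The second half is the crux, and you explicitly concede you do not have it: you write that ``the main obstacle is establishing this two-sided replacement lemma'' and that you merely ``expect'' it can be formulated. A proof cannot rest on a lemma that is only expected to exist, and the obstacle here is genuine, not merely technical. The replacement step in Theorem~\ref{main1} depends essentially on the symmetry of a zone: projective duality sends a zone to a single cap, the scalar $\sin\al_i$ simultaneously measures the plank's half-width in $\R^{d+1}$ and the dual cap's radius, and Lemma~\ref{replacement} is fundamentally a one-cap statement with a one-scalar hypothesis \eqref{replacement_cond1}. For an asymmetric segment these coincidences disappear: the relevant spherical object dual to the segment is a pair of caps of unequal radii about antipodal centers, the vector $b_i\uu_i$ no longer determines both bounding planes, and there is no obvious single scalar to play the role of $\sin\al$ in the norm comparison. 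You would have to formulate and prove a new two-sided replacement lemma, and an analogue of Lemma~\ref{3caps} for the tight/coplanar case, and none of that analysis appears in the write-up. As it stands, this is a reasonable plan of attack on an open problem, not a proof.
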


In connection with the spherical relative of Theorem~\ref{2goodman}, we suggest
the following conjecture.

\begin{conjecture} \label{spherical_2goodman}
  Let the caps with spherical radii $r_1, \dots, r_n$ lie on a hemisphere of
  the unit $d$-sphere. If no great sphere divides the caps into two non-empty
  sets without intersecting any cap, then the given caps can be covered by a
  cap of spherical radius $r = \sum_{i=1}^n r_i$.
\end{conjecture}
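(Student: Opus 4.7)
I would prove this by induction on $n$, mirroring the strategy of Goodman and Goodman's proof of Theorem~\ref{2goodman} via gnomonic projection from the pole of the hemisphere. First reduce to the nontrivial regime $\sum r_i < \pi/2$; otherwise the containing hemisphere itself (a cap of radius $\pi/2$) already covers everything. For the base case $n = 2$, gnomonic projection sends the two caps to compact convex bodies in $\R^d$ and great spheres to hyperplanes, so non-separability forces $D_1 \cap D_2 \ne \emptyset$ via Hahn-Banach; then $\angle(\uu_1, \uu_2) \le r_1 + r_2$, and the cap of radius $(\angle(\uu_1, \uu_2) + r_1 + r_2)/2 \le r_1 + r_2$ centered suitably on the geodesic from $\uu_1$ to $\uu_2$ covers $D_1 \cup D_2$.

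For the inductive step, I would first show that some pair $D_i, D_j$ must intersect. Otherwise gnomonic projection yields $n$ pairwise disjoint compact convex sets in $\R^d$, which by a classical separation argument (order the sets in a generic direction and choose a level avoiding all of them) can be partitioned into two non-empty groups by a hyperplane disjoint from all; lifting back yields a separating great sphere, contradicting non-separability. Then replace $\{D_i, D_j\}$ with the smallest enclosing spherical cap $D_{ij}$, whose radius $r_{ij}$ satisfies $r_{ij} \le r_i + r_j$. The new family of $n - 1$ caps is still non-separable, because any great sphere separating it places $D_{ij} \supseteq D_i \cup D_j$ on one side and splits the remaining $D_k$'s, which separates the original family non-trivially. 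Applying the induction hypothesis then yields a covering cap of radius at most $r_{ij} + \sum_{k \ne i, j} r_k \le \sum_k r_k$, as desired.

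The main obstacle is verifying that the merged family still lies in a hemisphere so that the induction hypothesis applies. The cap $D_{ij}$ has radius $< \pi/2$, but its center may push $D_{ij}$ beyond the original hemisphere, and the smallest enclosing cap of $D_{ij} \cup \bigcup_{k \ne i, j} D_k$ can be strictly larger than that of $\bigcup_k D_k$. I expect to surmount this by choosing a fresh hemisphere at each step based on a suitable enclosing cap of the merged family, and bounding its radius by $\pi/2$ under the standing assumption $\sum r_i < \pi/2$. If direct merging fails here, a fallback is to analyze the smallest enclosing cap $D^*$ of the original family directly via its characterizing extremal subset $I \subseteq [n]$ of size at most $d + 1$: the cases $|I| = 1, 2$ fall out of non-separability as above, while the case $|I| \ge 3$ would require a new argument in the spirit of Lemma~\ref{3caps}, using the first-order optimality condition on the contact points to bound the radius of $D^*$ in terms of $\sum_{i \in I} r_i$.
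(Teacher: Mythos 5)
The statement you are attempting to prove is Conjecture~\ref{spherical_2goodman} of the paper, which is posed as an \emph{open problem}: the authors give no proof of it, and only append a remark showing that the hemisphere hypothesis cannot be dropped. So there is no proof in the paper to compare against; a complete argument along your lines would be a new result, not a rediscovery.

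That said, your sketch has a genuine gap before the hemisphere issue you already flag is even reached. The claim that non-separability forces some pair $D_i, D_j$ to intersect is false, and the ``classical separation argument'' you invoke (order in a generic direction and pick a level between projections) does not work: the projections of pairwise disjoint convex bodies onto a line can all overlap, in every direction. Concretely, three unit disks centered at the vertices of an equilateral triangle of side $s$ with $2 < s < 4/\sqrt{3}$ are pairwise disjoint but admit no separating line, since separating one disk from the other two in normal direction $u$ requires both gaps $\ip{c_i - c_j}{u}$ to exceed $2$, while the best choice of $u$ (the angle bisector) only gives $s\sqrt{3}/2 < 2$. This also shows that Goodman--Goodman's planar theorem itself cannot be established by the scheme ``merge an intersecting pair at each step,'' so the inductive engine you propose stalls already at the stage of finding a valid pair to merge. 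On top of that, the hemisphere-maintenance problem you raise is exactly what the paper's own remark identifies as essential: the authors construct a non-separable family of caps of total radius just over $\pi/2$ whose smallest covering cap has radius at least $2\pi/3$ once the hemisphere constraint is violated, so any merging step must be shown to keep the whole configuration inside some hemisphere, and it is not clear this can be done. Your fallback via the extremal subset of the smallest enclosing cap is a reasonable direction, but as you yourself note, the case $\abs{I}\ge 3$ requires a new idea, so the argument remains a plan rather than a proof.
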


\begin{remark}
  The converse of Corollary~\ref{main_dual} is true up to rearrangement of the
  given caps in the following sense. If the total spherical radius of the caps
  is at least $\pi/2$, one can align their centers along a great sphere so that
  every great sphere intersects at least one cap. Now fix such a family of caps
  of total spherical radius $\pi/2$. Also choose an arbitrary $\pi/3$-covering
  $X$ for the $d$-sphere, that is, a finite subset of the $d$-sphere such that
  every point on the $d$-sphere is within spherical distance $\pi/3$ of some
  point in $X$. For each point in $X$, we add to the family a tiny cap centered
  at that point. Thus we obtain a non-separable family of caps of total
  spherical radius slightly larger than $\pi/2$. However, just to cover $X$, we
  need a cap of spherical radius $\ge 2\pi/3$. This construction shows the
  assumption that the given caps lie on a hemisphere is indispensable to
  Conjecture~\ref{spherical_2goodman}.
\end{remark}

Informally, the proof of Theorem~\ref{main1} takes advantage Bang's ingenious
approach~\cite{MR0046672} --- we know a bit more about the zones in case Bang's
argument does not apply. Moreover, Theorem~\ref{main1} can be seen as a
strengthening of Tarski's plank theorem~\cite{tarski1932uwagi}. The following
conjecture of Bezdek~\cite{MR2007958} shares these features.
\begin{conjecture}[Bezdek]
  Take a unit disk and turn it into an annulus by cutting a sufficiently small
  concentric circular hole. If the annulus is covered with a collection of
  planks, then the total width of the planks is at least $1$.
\end{conjecture}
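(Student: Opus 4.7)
My plan is to adapt the Bang--Bogn\'ar vector argument from the proof of Theorem~\ref{main1} to the planar Euclidean setting. Write each plank as $P_i = \dset{x\in\R^2}{\abs{\ip{x}{\uu_i} - d_i} \le \al_i}$, with unit normal $\uu_i$, signed offset $d_i$, and half-width $\al_i$. Set $\w_i := \al_i \uu_i$ and form the finite set $L := \dset{\sum_{i=1}^n \epsilon_i \w_i}{\epsilon \in \sset{\pm 1}^n} \subset \R^2$. Let $\w = \sum_i \epsilon_i^\star \w_i$ be a vector of maximal norm in $L$. Exactly as in the proof of Theorem~\ref{main1} (see Figure~\ref{hyperplane}), the inequality $\abs{\w}^2 \ge \abs{\w - 2\epsilon_i^\star \w_i}^2$ forces the distance from $\w$ to the line $\dset{x}{\ip{x}{\uu_i} = 0}$ to be at least $\al_i$. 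To handle nonzero offsets $d_i$, I would follow Bang's original device, replacing $\w_i$ by $(\al_i + \abs{d_i})\uu_i$ after an appropriate sign adjustment, so as to conclude $\w \notin P_i$ for every $i$.

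Assume for contradiction that the total width $\sum_i 2\al_i$ is strictly less than $1$, and let $\rho$ denote the inner radius of the annulus. Then $\abs{\w} \le \sum_i \al_i < 1/2$, placing $\w$ strictly inside the unit disk. If furthermore $\abs{\w} > \rho$, then $\w$ is a point of the annulus uncovered by any plank, giving the desired contradiction. Hence the core of the problem is securing the lower bound $\abs{\w} > \rho$.

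A random-signs calculation yields $\abs{\w}^2 \ge \sum_i \abs{\w_i}^2 = \sum_i \al_i^2$, so the case when some plank has half-width exceeding $\rho$ is immediate. The remaining and essential case is that of many very narrow planks, each with $\al_i \ll \rho$. Here I expect the main obstacle: one needs a Euclidean counterpart of Lemma~\ref{replacement} that allows a cluster of nearly parallel thin planks to be replaced by a single plank of the combined width, without losing the covering of the annulus. The key flexibility, absent in Tarski's original setting, is that the merged plank need not cover the portions of the originals lying inside the hole. Iterating such a merging should reduce the general case to one with a bounded number of fat planks, to which the averaging bound applies. Making this merging argument rigorous --- identifying the right geometric condition for planks to be safely combined, and controlling the loss in total width --- is what I expect to be the decisive step.
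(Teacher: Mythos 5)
The statement you are proving is \emph{not} a theorem of the paper --- it is one of the open conjectures listed in Section~\ref{open}, and the paper offers no proof of it. Your text is, as you yourself note, a plan of attack with an explicitly acknowledged missing step, not a proof, so there is nothing in the paper to compare it against; what I can do is assess whether the plan is sound.

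The first half is essentially right but needs a correction. For planks with nonzero offsets you cannot simply replace $\w_i = \al_i\uu_i$ by $(\al_i + \abs{d_i})\uu_i$: while this would indeed make $\w$ avoid the shifted planks, the resulting bound $\abs{\w}\le\sum_i(\al_i+\abs{d_i})$ is useless, since the offsets $d_i$ need not be small. The correct device is Bang's quadratic form: one maximizes $f(\epsilon)=\bigl\lvert\sum_i\epsilon_i\al_i\uu_i\bigr\rvert^2-2\sum_i\epsilon_i\al_i d_i$ over $\epsilon\in\{\pm1\}^n$ rather than $\abs{\w}^2$; the local maximality condition then yields $\abs{\ip{\w}{\uu_i}-d_i}\ge\al_i$ for every $i$, so $\w$ escapes each $P_i$, and one still has $\abs{\w}\le\sum_i\al_i<1/2$. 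But this repair kills the ``random signs'' argument you invoke afterwards: what random signs bound from below is $\mathbb{E}[f(\epsilon)]=\sum_i\al_i^2$, not $\abs{\w}^2$, because the offset term can be arbitrarily negative. So even the easy fat-plank case does not follow as stated.

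The decisive gap, which you correctly identify, is the lower bound $\abs{\w}>\rho$, and I see no reason to expect your merging strategy to supply it. Lemma~\ref{replacement} and Theorem~\ref{2goodman} are statements about caps/disks, where ``close enough to merge'' has a clean formulation and closure under unions of disks is available; for Euclidean planks there is no analogous replacement lemma, and in fact the obstruction is genuine: two planks with the same normal direction but different offsets cannot be covered by a single plank of the combined width, and the freedom to forfeit the hole buys you only an $O(\rho)$ amount of slack, not enough to merge planks whose offsets differ by a constant. Absent a precise merging lemma and a proof that iterating it terminates with control over the total width, the argument does not close, and the conjecture remains open.
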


We mention a conjecture attributed to Bezdek and Schneider
\cite[Problem 6.2]{MR2957644}. It was solved for $r \ge \pi / 2$
\cite[Theorem 6.1]{MR2650488}, and it is closely related to Corollary~\ref{ft}.

\begin{conjecture}[Bezdek--Schneider]
  If a cap of spherical radius $r$ is covered with a collection of convex
  spherical domains, then the total inradius of the domains is at least $r$.
\end{conjecture}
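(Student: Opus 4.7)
The plan is to attempt a reduction to Theorem~\ref{main1} or to the Bezdek--Schneider theorem \cite{MR2650488} on the full sphere (which states that the total inradius is at least $\pi$). For $r \geq \pi/2$ a short reduction already works: the complement of the cap $C$ of spherical radius $r$ is itself a cap of radius $\pi - r \leq \pi/2$, hence a convex spherical domain of inradius $\pi - r$. Adding it to the given cover produces a cover of the whole sphere, and the full-sphere Bezdek--Schneider theorem gives $\sum_i r_i + (\pi - r) \geq \pi$, so $\sum_i r_i \geq r$. This is essentially how the range $r \geq \pi/2$ is handled in \cite[Theorem 6.1]{MR2650488}.

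The essential difficulty is the case $r < \pi/2$, where the complement of $C$ is a cap of radius $> \pi/2$, contains antipodal pairs, and is no longer a convex spherical domain. A natural workaround is to decompose the complement into finitely many convex spherical pieces with total inradius close to $\pi - r$, then add them to the cover and apply the full-sphere theorem. The most naive decomposition---spherical lunes sharing the pole antipodal to the center of $C$---has total inradius exactly $\pi$, losing $r$ and yielding no information. I expect \emph{finding a convex decomposition of a cap of radius $>\pi/2$ whose total inradius approaches the cap's radius} to be the main obstacle, and it may well require a genuinely new idea.

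An alternative that avoids this obstruction is to emulate the proof of Theorem~\ref{main1} directly, formulating a replacement lemma for convex spherical domains in the spirit of Lemma~\ref{replacement}. The hope is that, given a sub-collection satisfying an appropriate metric equality, one can replace it by a single convex spherical domain whose inradius equals the sum of the replaced inradii, and iterate until $n=1$, which is trivial. The obstacle here is that the cap--zone projective duality underlying the proof of Theorem~\ref{main1} does not extend to arbitrary convex spherical domains, and I see no natural substitute for the vectors $\w_i = \sin\al_i\uu_i$ that drive the replacement step.

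A third and perhaps more tractable direction is to adapt the integral-geometric proof of Akopyan and Karasev \cite[Section~6]{MR2957644} from the full sphere to a cap. Their argument integrates a well-chosen quantity over the space of great spheres, using that a convex spherical domain of inradius $\rho$ ``blocks'' a set of great spheres of measure comparable to $\rho$. Restricting the integration to great spheres meeting $C$ should give an estimate scaling with $r$ rather than $\pi$; the main challenge will be to correctly account for the boundary effects from $\partial C$ that destroy the rotational symmetry on which the full-sphere argument relies.
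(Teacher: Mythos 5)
The statement you were asked to prove is not a theorem of the paper but an open problem: the authors explicitly label it a conjecture, attribute it to Bezdek and Schneider, and note that it is known only in the range $r \ge \pi/2$ (via \cite[Theorem 6.1]{MR2650488}). The paper offers no proof of it, so there is nothing to compare your proposal against.

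Your assessment is accurate. The reduction you give for $r \ge \pi/2$ is correct and is essentially the known argument: the complement of a cap of radius $r \ge \pi/2$ is a cap of radius $\pi - r \le \pi/2$, hence a convex spherical domain of inradius $\pi - r$; adjoining it to the cover and applying the full-sphere theorem yields $\sum_i r_i + (\pi - r) \ge \pi$, i.e.\ $\sum_i r_i \ge r$. You are also right that this reduction collapses when $r < \pi/2$: the complement is no longer convex, and the obvious lune decomposition wastes the entire inradius budget. Your second observation --- that the replacement machinery behind Theorem~\ref{main1} leans on cap--zone projective duality and the specific vectors $\w_i = \sin\al_i\,\uu_i$, neither of which has an analogue for general convex spherical domains --- correctly explains why the technique of this paper does not transfer. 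The integral-geometric route you sketch is a plausible direction, but as you say, restricting to great spheres meeting $C$ breaks the symmetry that makes the Akopyan--Karasev computation clean, and you have not closed that gap. In short, you have a valid proof in the known regime and an honest, accurate account of why the interesting regime $r < \pi/2$ remains open; no complete proof exists in the literature or in this paper, and your write-up does not claim otherwise.
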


Another interesting direction of generalization is known as the fractional
plank problem (see \cite{MR1902679} and \cite[Section 7]{akopyan2014bang} for
results and conjectures). Finally, we refer the readers to \cite[Chapter
3.4]{MR2163782} for a more comprehensive list of open problems related to
Tarski's plank problem.

\section*{Acknowledgements}

We would like to thank Arseniy Akopyan and Roman Karasev for inspirational
discussions on Fejes T\'oth's zone conjecture, and for bringing the historical
account of Tarski's plank problem to our attention. We are grateful to Ron
Aharoni and Benjamin Matschke for discussions on related topics. We also
benefited from the constructive comments of the anonymous referee.

\bibliographystyle{jalpha}
\bibliography{spherical_plank}

\end{document}